\newtheorem*{thm}{Theorem}
\newtheorem*{cor}{Corollary}
\newtheorem*{ack}{Acknowledgement}
\title{Abelian subgroups of Helly groups}
\author{Motiejus Valiunas}
\address{Instytut Matematyczny, Uniwersytet Wroc{\l}awski, plac Grunwaldzki 2/4, 50-384 Wrocław, Poland}
\email{Motiejus.Valiunas@math.uni.wroc.pl}
\subjclass[2020]{20E07, 20F65}
\begin{document}

\begin{abstract}
We observe that abelian subgroups of Helly groups are finitely generated, and consequently, soluble subgroups of Helly groups are virtually abelian.
\end{abstract}
\maketitle

A simplicial graph $\Gamma$ is called \emph{Helly} if for any collection $\{ B_i \mid i \in I \}$ of balls in $\Gamma$, if $B_i \cap B_j \neq \varnothing$ for all $i,j \in I$ then $\bigcap_{i \in I} B_i \neq \varnothing$. We say a group $G$ is \emph{Helly} if it acts geometrically on a Helly graph.

The class of Helly groups has been introduced and studied in \cite{ccgho}. It is a large class of groups, including all hyperbolic, cocompactly cubulated, and $C(4)$--$T(4)$ small cancellation groups \cite[Theorem~1.1]{ccgho}. On the other hand, all Helly groups are biautomatic and have some other non-positive-curvature-like behaviour \cite[Theorem~1.5]{ccgho}.

Motivated by this, we may ask whether all abelian subgroups of Helly groups are finitely generated \cite[Question~9.3]{ccgho}, which is a special case of the long-standing open question on whether all abelian subgroups of biautomatic groups are finitely generated \cite[\S7]{gersten-short}. In this note, we observe that the results of Descombes \& Lang \cite{descombes-lang} can be combined with those of Haettel \& Osajda \cite{haettel-osajda} to give an affirmative answer to the former question. We expect this result to be well-known to experts, but it does not seem to appear anywhere in the literature.

\begin{thm}
Abelian subgroups of Helly groups are finitely generated.
% BONUS PART FOR ANYONE WHO'S DOWNLOADED THE .tex VERSION
% In particular, if $G$ is a group acting geometrically on a Helly graph $\Gamma$ of combinatorial dimension $N$, then there exists a finite group $F$ such that every abelian subgroup of $G$ embeds in $\mathbb{Z}^N \times F$.
\end{thm}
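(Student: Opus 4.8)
The plan is to leave the combinatorial setting for the metric one and then run the flat torus machinery for spaces with convex geodesic bicombings. If $G$ is Helly, it acts geometrically on a Helly graph $\Gamma$; passing to the injective hull $E=E(\Gamma)$ (Isbell's tight span), which lies at bounded Hausdorff distance from $\Gamma$, yields a geometric action of $G$ on the injective metric space $E$. By the work of Haettel and Osajda, together with the structure of Helly graphs, $E$ may be taken finite‑dimensional, and by Lang it is complete and carries a canonical convex — indeed consistent and reversible — geodesic bicombing $\sigma$; by Descombes and Lang, $E$ moreover has finite asymptotic rank. So it suffices to show: if $G$ acts geometrically on a complete, finite‑dimensional metric space $E$ with a convex geodesic bicombing of finite asymptotic rank, then every abelian subgroup $A\le G$ is finitely generated.

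Next I would assemble the convex‑bicombing analogue of the flat torus theorem, following the CAT(0) template of Bridson and Haefliger but invoking the results of Descombes and Lang wherever convexity of the metric would be used. Cocompactness forces every element of $G$ to act as a semisimple isometry (displacement functions are convex along $\sigma$, hence attain their infimum on a cocompact orbit), and properness then shows that an element has zero translation length precisely when it has finite order. Commuting semisimple isometries have intersecting minimal sets, so a finitely generated abelian $B\le G$ has $\mathrm{Min}(B):=\bigcap_{b\in B}\mathrm{Min}(b)\neq\varnothing$; finiteness of the asymptotic rank bounds the rank of any free abelian subgroup of $G$ by some $N$, hence bounds the torsion‑free rank of every abelian subgroup; and finite‑dimensionality forces torsion subgroups of $G$ to be finite via a nested fixed‑point‑set argument — essentially the Haettel–Osajda input on torsion.

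Finally, take an abelian $A\le G$ and write $A=\bigcup_k A_k$ as an increasing union of finitely generated subgroups. The sets $\mathrm{Min}(A_k)$ are non‑empty, closed, convex and nested, and the crucial step is to conclude that $\mathrm{Min}(A):=\bigcap_{a\in A}\mathrm{Min}(a)$ is non‑empty; this is exactly where finite‑dimensionality (equivalently, finiteness of the asymptotic rank, and not merely the existence of a bicombing) is indispensable, since a nested family of non‑empty closed convex sets need not intersect in an infinite‑dimensional space, and I expect this nonemptiness to be the main obstacle. Once it is in hand the rest is formal: fixing $x_0\in\mathrm{Min}(A)$, the translation length satisfies $\tau(a)=d(x_0,ax_0)$ for every $a\in A$, so $\tau|_A$ is subadditive by the triangle inequality, is homogeneous on infinite‑order elements, and vanishes exactly on the (finite) torsion subgroup $T(A)$; and properness makes every $\tau$‑ball in $A$ finite. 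Hence $\tau$ induces a norm on $A/T(A)$, a torsion‑free group of rank $r\le N$, all of whose balls are finite, so $A/T(A)$ is a discrete subgroup of $\mathbb{R}^r$ — that is, free abelian of finite rank. Therefore $A$, an extension of a finite group by a finitely generated free abelian group, is finitely generated.
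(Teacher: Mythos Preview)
Your outline follows the CAT(0) template faithfully up to the point you yourself flag as the ``main obstacle,'' and that is where the argument breaks down. Showing $\mathrm{Min}(A)=\bigcap_k\mathrm{Min}(A_k)\neq\varnothing$ for an \emph{a priori} non-finitely-generated abelian $A$ is not a consequence of finite dimension alone: nested nonempty closed convex subsets of a proper finite-dimensional space can perfectly well have empty intersection (take $[n,\infty)\subset\mathbb{R}$), so some extra input specific to Min sets is required. In the CAT(0) case Bridson--Haefliger do not argue via this intersection either; they first choose a finitely generated $A_0\le A$ of maximal rank, use the product decomposition $\mathrm{Min}(A_0)\cong Y\times\mathbb{R}^n$, and analyse the induced action on the factors. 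You have not invoked any analogue of that splitting for consistent bicombings, and without it the nested-intersection step is unjustified --- indeed it is essentially circular, since once $A$ is known to be finitely generated the intersection is trivially nonempty.

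The paper sidesteps this difficulty entirely. Instead of looking for a common Min point, it works with the asymptotic translation length on the Helly graph $\Gamma$ and invokes the Haettel--Osajda theorem that for Helly groups these lengths are rational with denominator bounded by $2N$, where $N=\dim E\Gamma$. This yields a \emph{uniform} lower bound $\|h\|\ge 1/(2N)$ for every infinite-order element, and hence discreteness of the image of $H$ in $H\otimes\mathbb{Q}\cong\mathbb{Q}^n$, with no Min-set argument at all. Your uses of Descombes--Lang to bound the torsion-free rank and of Haettel--Osajda to handle torsion subgroups match the paper exactly; what is missing is precisely this third Haettel--Osajda ingredient, which replaces your unproved ``crucial step.''
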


Before we prove the Theorem, let us note one immediate consequence.

\begin{cor}
Soluble subgroups of Helly groups are virtually abelian.
\end{cor}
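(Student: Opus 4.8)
Fix a Helly group $G$ and a soluble subgroup $S \le G$; the plan is to show that $S$ is virtually abelian in two stages. First I would prove that $S$ is \emph{polycyclic}. Every abelian subgroup of $S$ is in particular an abelian subgroup of $G$, hence finitely generated by the Theorem, so $S$ satisfies the maximal condition on abelian subgroups. By a classical theorem in the theory of soluble groups, usually attributed to Mal'cev, a soluble group all of whose abelian subgroups are finitely generated is already polycyclic (equivalently, for soluble groups the maximal condition on abelian subgroups forces the maximal condition on all subgroups). Applying this to $S$ shows that $S$ is polycyclic, and in particular finitely generated. This step genuinely invokes Mal'cev's theorem: finite generation of the abelian subgroups of $S$ need not pass to the abelian quotients occurring in the derived series of $S$, so a naive induction on the derived length does not suffice.

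The second and harder stage is to promote polycyclicity to virtual abelianness, and here I would use the nonpositive-curvature geometry of $G$ rather than biautomaticity alone: whether a polycyclic group such as $\mathbb{Z}^2 \rtimes \mathbb{Z}$, with the generator of $\mathbb{Z}$ acting by an Anosov element of $\mathrm{SL}_2(\mathbb{Z})$ (a Sol-type group, which is not virtually abelian), can embed in a biautomatic group is exactly the sort of question that remains open. Since $G$ is Helly it acts geometrically on a Helly graph, and hence, by the work of Haettel \& Osajda together with Descombes \& Lang, on an injective metric space carrying a convex geodesic bicombing. On such a space the isometries arising from a geometric action are semisimple, so the flat torus theorem of Descombes \& Lang applies: each finitely generated abelian subgroup stabilises, and acts cocompactly on, an isometrically embedded flat. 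Feeding this into the usual inductive argument from the proof of the soluble subgroup theorem in the $\mathrm{CAT}(0)$ setting then shows that the finitely generated soluble group $S$ is virtually abelian.

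The main obstacle I anticipate lies in this second stage. The finite generation obtained above is indispensable---without it there is no cocompact flat to exploit---but it is not by itself sufficient; one must verify that the action of $S$ on the injective metric space associated with $G$ really meets the hypotheses of the flat torus theorem (properness, cocompactness on the flat, and semisimplicity of the relevant isometries), and then run the soluble-subgroup induction to deduce that the whole polycyclic group $S$, and not merely each of its abelian subgroups, contains a free abelian subgroup of finite index. This is precisely where the Helly geometry furnished by Haettel \& Osajda and by Descombes \& Lang, rather than the weaker property of biautomaticity, does the essential work.
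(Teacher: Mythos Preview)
Your first stage coincides with the paper's argument: abelian subgroups of $G$ are finitely generated by the Theorem, so the soluble subgroup $S$ is polycyclic by Mal'cev's result (the paper cites Segal for this).

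Your second stage, however, diverges from the paper on the basis of a factual error. You assert that whether a Sol-type polycyclic group can embed in a biautomatic group ``is exactly the sort of question that remains open,'' and for that reason you abandon biautomaticity and attempt a geometric argument via the flat torus theorem for injective spaces. But this is not open: Gersten and Short proved already in 1991 that every polycyclic subgroup of a biautomatic group is virtually abelian. The paper's proof of the Corollary simply invokes this: Helly groups are biautomatic, so once $S$ is known to be polycyclic the Gersten--Short theorem finishes immediately. The open question you may be thinking of is whether every \emph{abelian} subgroup of a biautomatic group is finitely generated---that is what the main Theorem of this paper settles in the Helly case, and it is the input to the first stage, not the second.

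Your proposed alternative---running the CAT(0)-style soluble subgroup theorem through the Descombes--Lang flat torus theorem on the injective hull---is a plausible programme, but as you yourself note it is not complete: one would need the Min-set and normaliser-of-flat machinery in the bicombed setting, and that is not all available off the shelf. There is no need to take this route; biautomaticity already does the work in one line.
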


\begin{proof}
Let $G$ be a Helly group. Since, by the Theorem, abelian subgroups of $G$ are finitely generated, it follows that soluble subgroups of $G$ are polycyclic \cite[Theorem~2 on p.~25]{segal}. On the other hand, as $G$ is biautomatic \cite[Theorem~1.5(1)]{ccgho}, polycyclic subgroups of $G$ are virtually abelian \cite[Theorem~6.15]{gersten-short}.
\end{proof}

\begin{proof}[Proof of the Theorem]
Let $G$ be a Helly group, so that $G$ acts geometrically on a Helly graph $\Gamma$. By \cite[Theorem~6.3]{ccgho}, $G$ also acts geometrically on the \emph{injective hull} $E\Gamma$ of $\Gamma$---a locally finite polyhedral complex of dimension $N < \infty$.

Let $H \leq G$ be an abelian subgroup, and let $n = \dim_{\mathbb{Q}} (H \otimes \mathbb{Q}) \in \mathbb{Z}_{\geq 0} \cup \{\infty\}$ be the torsion-free rank of $H$. Then $H$ (and so $G$) has a subgroup isomorphic to $\mathbb{Z}^k$ for any non-negative integer $k \leq n$. On the other hand, $E\Gamma$ admits a $G$-equivariant consistent bicombing \cite[Proposition~3.8]{lang}, implying by the Flat Torus Theorem \cite[Theorem~1.2]{descombes-lang} that $E\Gamma$ contains an isometrically embedded $k$-dimensional normed real vector space, and in particular that $k \leq N$, whenever $G$ contains a free abelian subgroup of rank $k$. Therefore, we have $n \leq N$, and in particular $n < \infty$.

Now consider the translation length function on $H$ with respect to the action on $\Gamma$, defined by $\|h\| := \lim_{m \to \infty} \frac{d_\Gamma(x,h^m \cdot x)}{m}$ for any $h \in H$, where $x \in \Gamma$ is a choice of a basepoint. It is clear that $\|h^m\| = |m| \cdot \|h\|$ and (since $H$ is abelian) $\|gh\| \leq \|g\|+\|h\|$ for any $g,h \in H$ and $m \in \mathbb{Z}$, implying that $\|{-}\|$ induces a semi-norm on $H \otimes \mathbb{Q} \cong \mathbb{Q}^n$. On the other hand, for any infinite order element $h \in H$, the translation length $\|h\|$ is non-zero and rational with denominator bounded above by $2N$ \cite[Theorem~O]{haettel-osajda}, implying that $\|h\| \geq \frac{1}{2N}$. Therefore, the image of the map $p\colon H \to H \otimes \mathbb{Q}$ is a discrete subgroup of $H \otimes \mathbb{Q} \cong \mathbb{Q}^n$, and therefore isomorphic to a lattice in $\mathbb{R}^n$, implying that $p(H) \cong \mathbb{Z}^n$.

Finally, $\ker(p)$ is a torsion subgroup of the Helly group $G$, implying by \cite[Corollary~I]{haettel-osajda} that $\ker(p)$ is finite. This shows that $H$ is finitely generated, as required.
% PROOF OF THE BONUS PART
% Therefore, by the structure theorem for finitely generated abelian groups we have $H \cong \mathbb{Z}^n \times \ker(p)$. The final assertion follows since $G$ has finitely many conjugacy classes of finite subgroups \cite[Theorem~1.5(2)]{ccgho}, so we may take $F$ to be a finite group containing all finite subgroups of $G$ (up to isomorphism).
\end{proof}

\begin{ack}
The author would like to thank Damian Osajda for a valuable discussion.
\end{ack}

\bibliographystyle{amsalpha}
\bibliography{../../all}

\end{document}